\documentclass[a4paper, 12pt]{amsproc}
\allowdisplaybreaks[1]

\usepackage{amsmath, amsthm, amssymb, mathtools, mathrsfs, stmaryrd}
\usepackage{ascmac}
\usepackage{comment}
\usepackage{bm}
\usepackage{ytableau}

\usepackage{hyperref}

\usepackage{tikz}

\usepackage{graphicx}
\usepackage{here}
\usepackage{time}
\usepackage[abbrev]{amsrefs}

\usepackage{xcolor}
\usepackage[capitalize,nameinlink,noabbrev,nosort]{cleveref}
\hypersetup{
 colorlinks=true,       
 linkcolor=blue,          
 citecolor=blue,        
 filecolor=blue,      
 urlcolor=blue,           
}

\usepackage{fullpage}

\makeatletter
\@namedef{subjclassname@2020}{%
  \textup{2020} Mathematics Subject Classification}
\makeatother

\newtheorem{theoremcounter}{Theorem Counter}[section]

\theoremstyle{definition}

\newtheorem{remark}[theoremcounter]{Remark}

\theoremstyle{plain}
\newtheorem{lemma}[theoremcounter]{Lemma}
\newtheorem{proposition}[theoremcounter]{Proposition}

\newtheorem{conjecture}[theoremcounter]{Conjecture}
\newtheorem{theorem}[theoremcounter]{Theorem}

\numberwithin{equation}{section}

\newcommand{\Q}{\mathbb{Q}}
\newcommand{\R}{\mathbb{R}}
\newcommand{\C}{\mathbb{C}}

\DeclareMathOperator{\ImNew}{Im}
\renewcommand{\Im}{\ImNew}

\def\red#1{{\color{red}#1}}%
\begin{document}
\author{Takashi Miyagawa}
\address[Takashi Miyagawa]{Onomichi City University,  1600-2 Hisayamada-cho, Onomichi, Hiroshima, 722-8506, Japan} 
\email{miyagawa@onomichi-u.ac.jp}


\subjclass[2020]{Primary 11M32, Secondary 11B35}

\begin{abstract}
For the Lindel\"of Hypothesis concerning the Riemann zeta function $\zeta(s)$, upper bounds as $\Im(s)\to\infty$ have been extensively studied for many years. In particular, the Lindel\"of Hypothesis is one of the most important open problems in analytic number theory. It is also known to be equivalent to certain mean value estimates, which provide a fundamental connection between pointwise upper bounds and integral mean values of zeta-functions.

In this paper, we consider an analogue of the Lindel\"of Hypothesis for the Barnes multiple zeta function 
$\zeta_r (s,a,(w_1,\dots,w_r)) = \sum_{m_1=0}^\infty \cdots \sum_{m_r=0}^\infty (a+m_1 w_1+\cdots+m_r w_r)^{-s} $,
and establish equivalent conditions in terms of integral mean values. In particular, the situation depends essentially on the $\Q$-rank of $\langle w_1,\dots,w_r\rangle$, and it is especially interesting that phenomena peculiar to the Barnes multiple zeta function appear according to this rank.
\end{abstract}

\keywords{Barnes multiple zeta-function, Lindel\"of hypothesis}

\title{Analogues of the Lindel\"of Hypothesis for the Barnes multiple zeta function and related problems}

\maketitle


\section{Introduction}
Let $r $ be a positive integer, $s=\sigma+it \ (\sigma, t \in \R) $ a complex variable, $a>0$, and $w_1,\ldots,w_r>0$.
The Barnes multiple zeta function, denoted by $\zeta_r(s,a,(w_1, \dots,w_r))$ and introduced in \cite{Barnes1899, Barnes1901, Barnes1904}, is defined as  
\begin{align}\label{zeta_r}
 \zeta_r (s,a,(w_1,\dots,w_r))
 =\sum_{m_1=0}^\infty \cdots \sum_{m_r=0}^\infty
  \frac{1}{(a+m_1 w_1+\cdots+m_r w_r)^s}.
\end{align}
The series on the right-hand side converges absolutely for $\mathrm{Re}(s) > r$ and can be meromorphically continued to the entire complex $s$-plane. 
The function has simple poles located at $s=1,\dots,r$.

The Barnes multiple zeta function was originally introduced by Barnes in the course of developing the theory of multiple gamma functions, which can be regarded as higher-dimensional analogues of the classical gamma function.  
This construction is based on Lerch's formula for the Hurwitz zeta function, which expresses the logarithm of the classical gamma function as
\[
\log \Gamma(a) = \zeta'(0,a) + \frac{1}{2}\log 2\pi.
\]
In particular, the multiple gamma function can be defined in terms of the derivative of the Barnes multiple zeta function at $s=0$ as
\[
\log \Gamma_r(a,\mathbf{w})
= \left. \frac{\partial}{\partial s} \zeta_r(s,a,\mathbf{w}) \right|_{s=0}
+ \rho_r(\mathbf{w}),
\]
where $\rho_r(\mathbf{w})$ is a normalization constant depending only on $\mathbf{w}$.  
Since then, Barnes-type zeta functions have been studied extensively in connection with special functions and analytic number theory.


Next, we will introduce some results of the most fundamental order evaluations for the Riemann zeta-function
	\[
	\zeta(s) = \sum_{n=1}^\infty \frac{1}{n^s}.
	\]
	These evaluations concern the order of $ \zeta(\sigma+it) $ 
	as the imaginary part $t$ tends to infinity.
	In particular, research on the order evaluation for  
	$ \zeta(1/2 + it) $ as $ t \rightarrow \infty $ is particularly prominent.
	As a classical asymptotic formula,
	\[
	\zeta(\sigma + it) \ll |t|^{(1-\sigma)/2 +\varepsilon} 
	\quad (0 \leq \sigma \leq 1, |t| \geq 2)
	\]
	is known, especially when $ \sigma=1/2 $ is
	$\zeta( 1/2 + it )  \ll t^{1/4+\varepsilon}$.
	This result is obtained using the functional equation of $ \zeta(s) $ and the Phragm\'en-Lindel\"of convexity principle. 
	Another classical asymptotic formula for $ \zeta(s) $ was proved by Hardy and Littlewood using the following formula;
	\begin{equation}\label{order_of_zeta}
		\zeta(s) = \sum_{n \leq x} \frac{1}{n^s} - \frac{x^{1-s}}{1-s} 
		+ O(x^{-\sigma}) \qquad (x \rightarrow \infty),
	\end{equation}
	uniformly for $ \sigma \geq \sigma_0 > 0,\ |t| < 2\pi x/C $, when $ C > 1 $ 
	is a constant. This formula gives an indication in the discussion in the
	critical strip of $ \zeta(s) $.
	Hardy and Littlewood improved this to
\begin{equation}\label{Hardy-Littlewood}
	\zeta \left( \frac12 + it \right) \ll t^{1/6 + \varepsilon}
\end{equation}
by the van der Corput method, applying to \eqref{order_of_zeta}.
Furthermore, the problem of improving upper bounds for
$\zeta(1/2+it)$ with respect to $t$ has been extensively studied
by many mathematicians. To mention a few results, in 1988,
Bombieri and Iwaniec established the bound
$\zeta(1/2 + it) \ll t^{9/56 + \varepsilon}$.
Since then, several refinements have been obtained, and Huxley proved $\zeta(1/2 + it) \ll t^{32/205 + \varepsilon}$ in 2005.
Furthermore, in 2017,
\[
\zeta\left( \frac12 + it \right) \ll t^{13/84 + \varepsilon}
\]
was proved by Bourgain (see \cite{Bourgain2017}).
At present, this is the best known bound.

Improvements in order estimates for
$\zeta(1/2+it)$ are still ongoing, and the true order is conjectured to be
\begin{equation}\label{Lindelof_hypothesis}
	\zeta \left( \frac12 + it \right) \ll t^{\varepsilon}
\end{equation}
for any $\varepsilon >0$.
This conjecture is called the Lindel\"of hypothesis.
Furthermore, it is well known that the Lindel\"of hypothesis
(\ref{Lindelof_hypothesis}) is equivalent to
\begin{equation}\label{Lindelof_hypothesis_int}
	\int_2^T
	\left|
	\zeta\left(\frac12+it\right)
	\right|^{2k}
	dt
	= O(T^{1+\varepsilon})
\end{equation}
for any $k\in\mathbb N$ and any $\varepsilon>0$
(see, for example,
\cite[Chapter 8]{Ivic1985}
and
\cite[Chapter 8]{Titchmarsh1986}).

\bigskip

For convenience in this paper, we introduce the following boldface symbols to represent index tuples:
\begin{align*}
 &\mathbf{m}=(m_1,\dots,m_r), \quad
  \mathbf{n}=(n_{1},\dots,n_{r}), \quad\\
 &\mathbf{w}=(w_1,\dots,w_r), \ \, \quad
  \mathbf{1}=(1,\dots,1).
\end{align*}
We assume that $w_1,\dots,w_r\in \mathbb{R}_{>0}$ throughout this paper.
Using this notation, \eqref{zeta_r} 
can be rewritten as  
\begin{align}
 \zeta_r(s,a,\mathbf{w})
 =\sum_{m_1,\dots,m_r\ge 0}
 \frac{1}{(a+\mathbf{m}\cdot\mathbf{w})^s}.
\end{align}
As an analogue of the classical formula (\ref{order_of_zeta}), the following result was obtained in \cite{Miyagawa2018}.
    
	\medskip

\begin{proposition}[Theorem 2.1 in \cite{MiyagawaMurahara2025}]\label{finite_sum}
Let $r-1<\sigma_1<\sigma_2$, $x\ge 1$, and $C>1$. 
If $s=\sigma+it \in \C$ with $\sigma_1<\sigma<\sigma_2$ and $|t|\le 2\pi x/C$, then
\begin{align*} 
 \zeta_r(s,a,\mathbf{w})
 &=\sum_{0\le m_1\le x}
  \cdots 
  \sum_{0\le m_r\le x}
  \frac{1}{(a+\mathbf{m}\cdot\mathbf{w})^{s}}\\
 &\quad -
 \sum_{\substack{E\subseteq\{w_1,\dots,w_r\}\\E\ne\emptyset}}
 (-1)^{\#E}
 \frac{(a+x\sum_{ e\in E }e)^{r-s}}{(s-1)\cdots (s-r) w_1\cdots w_{r}}
 +O(x^{r-1-\sigma})
\end{align*}
as $x\rightarrow \infty$.
\end{proposition}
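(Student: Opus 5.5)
We argue by induction on $r$, with the case $r=1$ being the Hardy--Littlewood type formula for the Hurwitz zeta function $\zeta(s,a)=\sum_{m\ge0}(a+m)^{-s}$,
\[
\zeta_1(s,a,w)=\sum_{0\le m\le x}(a+mw)^{-s}-\frac{(a+xw)^{1-s}}{(1-s)w}+O(x^{-\sigma}),
\]
valid uniformly for $\sigma>\sigma_1>0$ and $|t|\le 2\pi x/C$. This is proved exactly as \eqref{order_of_zeta}. One writes the tail $\sum_{m>x}(a+mw)^{-s}$ by Euler--Maclaurin, or by the van der Corput lemma $\sum_{x<m\le N} f(m)=\int_x^N f+O(1)$ for $f$ with monotone derivative $|f'|\le 1-1/C$. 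The integral of $(a+uw)^{-s}$ gives the main term, and the error is $O(x^{-\sigma})$. Writing $(a+uw)^{-s}=(a+uw)^{-\sigma}e^{-it\log(a+uw)}$, the phase derivative is $tw/(2\pi(a+uw))\le 1/C$ once $u\ge x$ and $|t|\le 2\pi x/C$. In general this needs $w$ normalized, or else we absorb $w$ into the constant $C$ by rescaling $x$. This normalization detail must be tracked, but it only affects implied constants.

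For the inductive step, we split the outermost variable. We have
\[
\zeta_r(s,a,\mathbf w)=\sum_{m_r\ge0}\zeta_{r-1}(s,a+m_rw_r,(w_1,\dots,w_{r-1})).
\]
Instead of inducting in this form, it seems cleaner to use the complementary-set decomposition. The full sum over $\mathbb Z_{\ge0}^r$ equals the box sum over $[0,x]^r$ plus, by inclusion--exclusion over the set $F$ of coordinates with $m_j>x$, the terms
\[
-\sum_{F\ne\emptyset}(-1)^{\#F}\sum_{m_j>x\ (j\in F)}\ \sum_{m_j\ge0\ (j\notin F)}(a+\mathbf m\cdot\mathbf w)^{-s}.
\]
Each inner sum over unrestricted $m_j$, $j\notin F$, together with the restricted ones, is treated by replacing the sums over $m_j>x$, $j\in F$, by integrals one variable at a time. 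Each replacement uses the one-variable lemma above applied to the function $u\mapsto \zeta_{r-\#F}(s,a+\dots+uw_j,\dots)$ expressed in Dirichlet-series form. Here one needs $\sigma>r-1$ so that, after the $u$-integration reduces the exponent by one, all the remaining series still converge absolutely; this is where the hypothesis $\sigma_1>r-1$ enters.

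It is better to organize the computation directly as follows. Write $S=\sum_{\mathbf m\ge0}-\sum_{\mathbf m\in[0,x]^r}$ and compare it with the integral $\int_{\mathbb R_{\ge0}^r\setminus[0,x]^r}(a+\mathbf u\cdot\mathbf w)^{-s}\,d\mathbf u$. The integral over the region $\{u_j>x,\ j\in F\}$ is computed explicitly by iterated integration. Each integration produces a factor $1/((s-k)w_j)$ and evaluates at the endpoints. Summing over $F$ with signs, this yields exactly
\[
\sum_{E\ne\emptyset}(-1)^{\#E}\frac{(a+x\sum_{e\in E}e)^{r-s}}{(s-1)\cdots(s-r)w_1\cdots w_r}.
\]
The combinatorial identity should be checked; the signs $(-1)^{\#E}$ arise from the lower-limit evaluations at $0$ versus $x$ in each coordinate.

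The main obstacle is bounding the sum-minus-integral error by $O(x^{r-1-\sigma})$ uniformly in $|t|\le 2\pi x/C$, since the naive Euler--Maclaurin error involves $|s|$ and is too large. We handle this by applying the van der Corput sum-to-integral lemma in one coordinate at a time, with the remaining coordinates fixed. In the coordinate $m_j>x$, the phase derivative is $|t|w_j/(2\pi(a+\mathbf m\cdot\mathbf w))\le |t|/(2\pi x)<1$, uniformly in the other coordinates. Each such step has error $O((a+\dots)^{-\sigma})$ at the boundary $m_j=x$. Summed over the remaining $r-1$ free coordinates, of which some range over $[0,x]$ and some over $(x,\infty)$, this gives $O(x^{r-1-\sigma})$ by comparison with $\int(x+|\mathbf u|)^{-\sigma}\,d\mathbf u$ in $r-1$ dimensions, which converges at infinity since $\sigma>r-1$. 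For the coordinates not in $F$, which range over all of $[0,\infty)$ including small values, the phase derivative is not small. We therefore first write those coordinates as box part plus tail and recurse, which is precisely why the inclusion--exclusion over $F$ is set up so that every tail coordinate satisfies $m_j>x$.
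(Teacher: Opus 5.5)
The paper only quotes this proposition and gives no proof of it, so I am judging your argument on its own merits. Your inclusion--exclusion is correct, and so is your evaluation of the main term: for $\sigma>r$,
\[
\int_{u_j>x\ (j\in F),\ u_j\ge 0\ (j\notin F)}(a+\mathbf u\cdot\mathbf w)^{-s}\,d\mathbf u=\frac{(a+x\sum_{j\in F}w_j)^{r-s}}{(s-1)\cdots(s-r)w_1\cdots w_r}.
\]
The step in a tail coordinate is also fine. For $r=1$ you need no normalization, because $w/(a+xw)<1/x$.

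\textbf{The gap: coordinates confined to $[0,x]$.} The main term is an integral over all of $\R_{\ge0}^r\setminus[0,x]^r$. So on a slab $\{u_i>x\ (i\in G),\ 0\le u_j\le x\ (j\notin G)\}$ you must also replace $\sum_{0\le m_j\le x}$ by $\int_0^x$. There the only lower bound for $a+\mathbf u\cdot\mathbf w$ is $x\sum_{i\in G}w_i$. The phase derivative is therefore only bounded by $w_j/(C\sum_{i\in G}w_i)$, which can exceed $1$ when the weights are unequal.

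For example, take $r=2$, $w_1=1$, $w_2=10$, $C=2$, $t=\pi x$. After converting $m_1>x$, the phase in $m_2\in[0,x]$ satisfies $|f'(u)|=5x/(a+x+10u)$, which runs from about $5$ down to about $0.45$. The lemma requiring $|f'|\le\theta<1$ does not apply, and $f(u)-\nu u$ has stationary points for $\nu=1,\dots,4$.

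Your ``box part plus tail and recurse'' does not cure this. The small values of $m_j$ that make $|f'|$ large lie in the box part $[0,x]$, which is exactly what the recursion leaves behind. The recursion only works when $\max_j w_j<C\min_j w_j$.

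\textbf{How to close it.} The missing idea is to use the factor produced by integrating a tail coordinate first.
\begin{itemize}
\item Convert all tail coordinates before any box coordinate. On the box this leaves an amplitude $\ll x^{g-\sigma}/|(s-1)\cdots(s-g)|$, where $g=\#G$ satisfies $1\le g\le r-1$ on any slab that contains box coordinates. Also $|s-k|\ge\sigma_1-(r-1)>0$ for $k\le g$.
\item If $|t|\ge\delta x$, then $|s-1|\ge\delta x$. Bound the box sum minus the box integral trivially by $x^{r-g}\cdot x^{g-\sigma-1}\ll x^{r-1-\sigma}$.
\item If $|t|<\delta x$ with $\delta=\pi\min_i w_i/\max_i w_i$, note that $a+\mathbf u\cdot\mathbf w\ge x\min_i w_i$ at every stage, since some tail coordinate always contributes. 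This gives $|f'|\le 1/2$ in every coordinate, and your lemma applies throughout.
\end{itemize}
Alternatively, use Poisson summation (Titchmarsh's Lemma 4.10) and bound the $\nu\ne0$ stationary-phase terms; in the example above they are $\ll x^{1/2-\sigma}$.

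\textbf{A second problem: analytic continuation.} Your account of where $\sigma>r-1$ enters is reversed. Integrating a coordinate raises the exponent from $-s$ to $1-s$. So the remaining main-term series converge absolutely only for $\sigma>r$, and the same holds already for $\zeta_r$ itself and for each slab sum. What you must do is:
\begin{itemize}
\item prove the identity for $\sigma>r$, writing the error as a series of one-variable local errors
\[
\Delta_k(c)=\sum_{m>x}(c+mw)^{k-s}-\frac{(c+xw)^{k+1-s}}{(s-k-1)w},
\]
each analytic for $\sigma>k$ and $O((c+xw)^{k-\sigma})$ uniformly for $|t|\le 2\pi x/C$;
\item show that this error series converges locally uniformly for $\sigma>r-1$;
\item continue analytically.
\end{itemize}
That is the real role of the hypothesis $\sigma_1>r-1$.
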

    \medskip
	

Furthermore, by applying the finite sum approximation in
Proposition \ref{finite_sum} with $x=t$, we immediately obtain
the following upper bound for
$\zeta_r(\sigma+it,a,\mathbf{w})$ as $t \to \infty$:

\begin{proposition}[Theorem 1.5 in \cite{miyagawa2026}]\label{thm:Barnes_bounds}
Let $s=\sigma+it$ with $t\ge 2$, $a>0$, and
$\mathbf{w}\in\mathbb{R}_{>0}^r$.
Then the following bounds hold:
\[
\zeta_r(\sigma+it,a,\mathbf{w}) \ll 
\begin{cases}
1 
& \text{if } \quad   \sigma> r,\\
\log t
& \text{if } \quad  \sigma=r,\\
t^{\,r-\sigma}
& \text{if } \quad  r-1< \sigma < r,
\end{cases}
\]
as $t \to \infty$, uniformly for $\sigma$ in any fixed compact subinterval
of each region.
The implied constants depend on $a$, $r$, $\mathbf{w}$, and the chosen strip.
\end{proposition}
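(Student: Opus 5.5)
Apply \cref{finite_sum} with $x=t$. Concretely, take $C=2\pi$ (any $C>1$ with $2\pi x/C\ge t$ works), so the condition $|t|\le 2\pi x/C$ holds. Then estimate the three resulting pieces separately: the finite sum, the boundary terms indexed by nonempty $E\subseteq\{w_1,\dots,w_r\}$, and the error term $O(x^{r-1-\sigma})$. The case $\sigma>r$ needs no approximation at all. There the defining series converges absolutely, so
\[
|\zeta_r(\sigma+it,a,\mathbf{w})|\le \zeta_r(\sigma,a,\mathbf{w})\le \zeta_r(\sigma_0,a,\mathbf{w})
\]
for $\sigma\ge\sigma_0>r$. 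This gives the bound $\ll1$ uniformly on compact subintervals; the bound on the right is uniform since $\zeta_r(\sigma,a,\mathbf{w})$ is decreasing in $\sigma$ for real $\sigma>r$.

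For $r-1<\sigma\le r$, with $x=t$ I will bound the finite sum trivially:
\[
\Bigl|\sum_{0\le m_j\le t}(a+\mathbf{m}\cdot\mathbf{w})^{-s}\Bigr|\le \sum_{0\le m_j\le t}(a+\mathbf{m}\cdot\mathbf{w})^{-\sigma}.
\]
To estimate this I group the $\mathbf{m}$ by the size of $\|\mathbf{m}\|_1$. The number of $\mathbf{m}$ with $\|\mathbf{m}\|_1=n$ is $\asymp n^{r-1}$, and $a+\mathbf{m}\cdot\mathbf{w}\asymp 1+\|\mathbf{m}\|_1$ with constants depending on $a$ and $\mathbf{w}$ (use $\min w_j$ and $\max w_j$). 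The range $m_j\le t$ forces $\|\mathbf{m}\|_1\le rt$, so the sum is
\[
\ll \sum_{n\le rt}(1+n)^{r-1-\sigma}.
\]
This is $\ll t^{r-\sigma}$ when $\sigma<r$ and $\ll\log t$ when $\sigma=r$. The implied constant is uniform for $\sigma$ in a compact subinterval, since $r-\sigma$ stays bounded away from $0$ when $\sigma<r$ is confined to such an interval.

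Next come the boundary terms. Each has modulus
\[
\frac{(a+t\sum_{e\in E}e)^{r-\sigma}}{|s-1|\cdots|s-r|\,w_1\cdots w_r}.
\]
The numerator is $\ll t^{r-\sigma}$, and $|s-j|\ge t$ for every $j$, so each term is $\ll t^{r-\sigma-r}=t^{-\sigma}$. This is $o(1)$, hence negligible; there are only $2^r-1$ such terms. The error term is $O(t^{r-1-\sigma})$, which is $o(1)$ because $\sigma>r-1$. Combining the three pieces gives the cases $\sigma=r$ and $r-1<\sigma<r$.

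The only delicate point is uniformity. Proposition \ref{finite_sum} requires $\sigma_1<\sigma<\sigma_2$ with $\sigma_1>r-1$, so I choose $\sigma_1,\sigma_2$ enclosing the given compact subinterval; its $O$-constant then depends only on these. At the boundary point $\sigma=r$ I apply the proposition with an open interval around $r$ and use the $\log$ estimate of the lattice sum above. I expect the lattice-point counting step to be the main (though routine) obstacle. There one must control $\#\{\mathbf{m}\in\mathbb{Z}_{\ge0}^r : n\le \|\mathbf{m}\|_1<n+1\}\ll n^{r-1}$ and carry out the comparison $a+\mathbf{m}\cdot\mathbf{w}\asymp 1+\|\mathbf{m}\|_1$ carefully.
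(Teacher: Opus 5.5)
Your proof is correct and takes the same approach the paper indicates. You apply Proposition~\ref{finite_sum} with $x=t$ (the choice $C=2\pi$ is admissible), bound the finite lattice sum trivially by $t^{r-\sigma}$ or $\log t$, and note that the $2^r-1$ boundary terms are $\ll t^{-\sigma}$ and the error is $\ll t^{r-1-\sigma}=o(1)$.

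One small slip in the case $\sigma>r$: $\zeta_r(\sigma,a,\mathbf w)$ need not be decreasing in real $\sigma$, since for $a<1$ the term $a^{-\sigma}$ grows. Replace the monotonicity step by the termwise bound $(a+\mathbf m\cdot\mathbf w)^{-\sigma}\le(a+\mathbf m\cdot\mathbf w)^{-\alpha}+(a+\mathbf m\cdot\mathbf w)^{-\beta}$ for $\sigma\in[\alpha,\beta]$. This gives $|\zeta_r(\sigma+it,a,\mathbf w)|\le\zeta_r(\alpha,a,\mathbf w)+\zeta_r(\beta,a,\mathbf w)$, which is the uniformity on compact subintervals that the statement asks for.
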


\section{Statements of Main Results}

As an analogue of the equivalence between
\eqref{Lindelof_hypothesis} and \eqref{Lindelof_hypothesis_int}
for the classical Lindel\"of Hypothesis, we obtain the following theorem for the Barnes multiple zeta function.
	

    \medskip
    

\begin{theorem}\label{Lindelof_d=1}
Suppose that $a>0$ and that
$ \mathbf{w}=\lambda (p_1,\ldots,p_r) $,
where $\lambda>0$, $p_j\in \mathbb{N}$, and
$\gcd(p_1,\ldots,p_r)=1$.
For any $\varepsilon>0$,
\[
\zeta_r(\sigma+it,a,\mathbf{w})
=O(t^\varepsilon)
\qquad
\left(r-\frac{1}{2}\leq \sigma\leq r\right)
\]
holds if and only if
\[
\int_2^T
\left|
\zeta_r(\sigma+it,a,\mathbf{w})
\right|^{2k}dt
=
O(T^{1+\varepsilon})
\qquad
\left(r-\frac{1}{2}\leq \sigma\leq r\right)
\]
holds for any $k\in \mathbb{N}$ and any $\varepsilon>0$.
Here the implied constants may depend on
$\varepsilon$, $k$, $\sigma$, $a$, and $\mathbf{w}$.
\end{theorem}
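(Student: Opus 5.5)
The direction from the pointwise bound to the mean value is immediate. If $\zeta_r(\sigma+it,a,\mathbf{w})=O(t^{\varepsilon})$ for $r-\frac12\le\sigma\le r$, then
\[
\int_2^T|\zeta_r(\sigma+it,a,\mathbf{w})|^{2k}\,dt\ll T^{1+2k\varepsilon}.
\]
Since $\varepsilon$ is arbitrary, we may replace $2k\varepsilon$ by $\varepsilon$. All the work lies in the converse, which I would model on the classical argument in \cite[Chapter 8]{Titchmarsh1986}.

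The key structural input is the rationality hypothesis $\mathbf{w}=\lambda(p_1,\dots,p_r)$. Grouping terms by $n=\mathbf{m}\cdot\mathbf{p}$ gives
\[
\zeta_r(s,a,\mathbf{w})=\lambda^{-s}\sum_{n\ge0}\frac{c(n)}{(a/\lambda+n)^{s}},
\]
where $c(n)=\#\{\mathbf{m}\in\mathbb{Z}_{\ge0}^r:\mathbf{m}\cdot\mathbf{p}=n\}$. This is a one-dimensional Dirichlet series, and $c(n)$ is a quasi-polynomial of degree $r-1$ in $n$. Because $c(n)$ is quasi-polynomial, $\zeta_r$ decomposes as a finite linear combination of shifted Hurwitz zeta functions $\zeta(s-j,\alpha)$ with $0\le j\le r-1$, where $\alpha$ runs over residues modulo $\operatorname{lcm}(p_i)$ after rescaling.

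The terms with $j=r-1$ carry the shift that puts $\sigma=r-\frac12$ on the critical line $\Re(s-(r-1))=\frac12$ of the Hurwitz functions. The terms with $j<r-1$ are evaluated at real part at least $\frac32$, so they are bounded. On the range $r-\frac12\le\sigma\le r$, the function is therefore essentially a Hurwitz-type Dirichlet series $\sum_n b(n)(n+\beta)^{-(s-r+1)}$ with bounded coefficients $b(n)$. This reduction explains why the strip is $[r-\frac12,r]$, and why the $\Q$-rank-one hypothesis matters.

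For the converse, I would argue as in Titchmarsh's Theorem 8.3. First fix $\sigma_0\in[r-\frac12,r]$ and a point $t_0$, large. Let $F(s)$ be the function $\zeta_r(s,a,\mathbf{w})^k$, or more cleanly $\bigl((s-1)\cdots(s-r)\zeta_r\bigr)^k$ times a harmless factor, to remove the poles. Next apply Cauchy's formula, or the subharmonicity of $|F|^2$, on a small disc of radius $\delta<\frac12$ centred at $\sigma_0+it_0$:
\[
|F(\sigma_0+it_0)|^{2}\ll \iint_{\text{disc}}|F(\sigma+it)|^{2}\,d\sigma\,dt .
\]
This needs the mean value hypothesis for $\sigma$ slightly to the left of $\sigma_0$ when $\sigma_0=r-\frac12$, which is outside the assumed range.

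To handle this, I would use a Gabriel/Hardy--Littlewood convexity argument, or three-lines convexity for mean values (Titchmarsh §7.8), between the line $\sigma=r-\frac12$ and a line $\sigma=r+1$. On the line $\sigma=r+1$, $\zeta_r$ is bounded by Proposition \ref{thm:Barnes_bounds}. Alternatively, and more simply, I would use the smoothed Mellin representation
\[
F(s_0)=\frac{1}{2\pi i}\int_{(c)}F(s_0+z)\Gamma(z)Y^{z}\,dz-\sum_{n\le\cdots}\cdots ,
\]
with $c>r$. I would then shift the contour to $\Re(s_0+z)=r-\frac12$, so that only values on the allowed lines enter, bounding the shifted integral by Cauchy--Schwarz and the $2k$-th moment in intervals of length $t_0^{\varepsilon}$ around $t_0$. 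The poles crossed are at $z=0$, giving $F(s_0)$, and at $s_0+z\in\{1,\dots,r\}$, which contribute $O(e^{-t_0})$ thanks to $\Gamma$. Choosing $Y$ as a power of $t_0$ and letting $k\to\infty$ then gives $|\zeta_r(s_0)|\ll t_0^{(1+\varepsilon)/(2k)+\varepsilon}$, hence $O(t_0^{\varepsilon'})$.

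The main obstacle is the edge $\sigma_0=r-\frac12$. Here I expect to choose the shifted line at $\Re=\sigma_0$ itself, with $z$ purely imaginary plus a small indentation around $z=0$. Equivalently, I would prove the bound for $\sigma_0>r-\frac12$ first, using a contour at $r-\frac12$. I would then pass to $\sigma_0=r-\frac12$ by the Phragmén--Lindelöf principle applied on the strip $[r-\frac12-\eta,r-\frac12+\eta]$. For this I would use the functional equation of the Hurwitz pieces to obtain the bound on the line $r-\frac12-\eta$, with growth $t^{2\eta+\varepsilon}$ by symmetry. Letting $\eta\to0$ completes the argument.
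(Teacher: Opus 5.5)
Your converse is a genuinely different route from the paper's. It works for $\sigma_0>r-\frac12$, but the final step at the edge $\sigma=r-\frac12$ is misjustified.

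\textbf{The paper's route.} The paper uses the Hurwitz decomposition only to get an a priori bound $\zeta_r'(\sigma+it,a,\mathbf w)\ll t^{B}$ on the strip, and then argues one line at a time. By the mean value theorem (Lemma \ref{lem:local_persistence}), $|\zeta_r(\sigma+it_0,a,\mathbf w)|\ge t_0^{\eta}$ forces $|\zeta_r|\ge\frac12 t_0^{\eta}$ on an interval of length $\asymp t_0^{\eta-B}$. Hence $\int_2^{2t_0}|\zeta_r(\sigma+it,a,\mathbf w)|^{2k}\,dt\gg t_0^{2k\eta+\eta-B}$, which contradicts the $O(T^{1+\varepsilon})$ hypothesis once $2k\eta+\eta-B>1+\varepsilon$. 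Because this is one-dimensional, it never needs values left of $\sigma=r-\frac12$. The edge line is treated like every other line, since the hypothesis includes the moment bound there. The obstruction you met with discs comes only from the two-dimensional subharmonicity argument.

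\textbf{What your route buys and costs.} The smoothed Mellin argument gives a stronger, single-line criterion. Shifting to $\Re(s_0+z)=r-\frac12$ uses the $2k$-th moments only on $\sigma=r-\frac12$ and still yields $\zeta_r(\sigma_0+it,a,\mathbf w)\ll t^{\varepsilon}$ for every $\sigma_0\in(r-\frac12,r]$, as in the classical statement for $\zeta(s)$. The cost is a contour shift, which needs a priori polynomial growth of $\zeta_r$ itself (available here from the Hurwitz decomposition), plus a separate argument at the edge.

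\textbf{Where the edge step fails.} There is no ``symmetry'' for $\zeta_r$. Hurwitz's functional equation expresses $\zeta_H(w,\beta)$ through the periodic zeta functions $\sum_{n\ge1}e^{\pm2\pi in\beta}n^{-(1-w)}$, which are different functions. Their size on $\Re(1-w)=\frac12+\eta$ is controlled neither by the hypothesis nor by what you proved for $\sigma>r-\frac12$. So no bound $t^{2\eta+\varepsilon}$ on $\sigma=r-\frac12-\eta$ follows. The indentation variant does not help either: the singularity of $\Gamma(z)$ at $z=0$ would then require pointwise control of $F$ near $s_0$, which is the very quantity you are trying to bound.

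\textbf{Repair.} On $\sigma=r-\frac12-\eta$ use only some a priori bound $t^{A}$, for example from the convexity bound for the Hurwitz pieces. On $\sigma=r-\frac12+\delta$ use your bound $t^{\varepsilon}$. Phragm\'en--Lindel\"of then gives the exponent $(\delta A+\eta\varepsilon)/(\eta+\delta)$ on $\sigma=r-\frac12$. Let $\delta\to0$ with $\eta$ fixed; a symmetric strip is not enough once the left bound is only $t^{A}$. This yields $O(t^{\varepsilon'})$, which is just the continuity of the convex Lindel\"of function $\mu(\sigma)$. Alternatively, since the theorem assumes the moment bound on $\sigma=r-\frac12$ too, you can close the edge with the paper's persistence argument on that line.
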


\begin{theorem}\label{Lindelof_zeta_r_rank_d}
Suppose that $a>0$, $w_1,\dots,w_r>0$, and
$ \dim_{\mathbb Q}\langle w_1,\dots,w_r\rangle=d \ (1\le d \le r)$.
Assume that $\zeta_r'(s,a,\mathbf w)$ with respect to $s$ satisfy polynomial growth
conditions in vertical strips in the region
$ r-d/2\leq \sigma\leq r $.
Then, for any $\varepsilon>0$,
\[
\zeta_r(\sigma+it,a,\mathbf w)
=
O(t^\varepsilon)
\qquad
\left(r-\frac d2\leq \sigma\leq r\right)
\]
holds if and only if
\[
\int_2^T
\left|
\zeta_r(\sigma+it,a,\mathbf w)
\right|^{2k}dt
=
O(T^{1+\varepsilon})
\qquad
\left(r-\frac d2\leq \sigma\leq r\right)
\]
holds for any $k\in\mathbb N$ and any $\varepsilon>0$.
Here the implied constants may depend on
$\varepsilon$, $k$, $\sigma$, $a$, and $\mathbf w$.
\end{theorem}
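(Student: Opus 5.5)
The plan follows the classical argument for the equivalence of \eqref{Lindelof_hypothesis} and \eqref{Lindelof_hypothesis_int} (as in \cite[Chapter 8]{Titchmarsh1986}). There are two directions, and only one of them needs work.

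The forward direction is immediate. If $\zeta_r(\sigma+it,a,\mathbf w)=O(t^{\varepsilon})$ uniformly for $r-d/2\le\sigma\le r$, then $|\zeta_r|^{2k}\ll t^{2k\varepsilon}$. Integrating over $[2,T]$ gives $O(T^{1+2k\varepsilon})$, and since $\varepsilon$ is arbitrary we may replace $2k\varepsilon$ by $\varepsilon$.

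For the converse, fix $\sigma_0$ with $r-d/2\le\sigma_0\le r$ and a large $t_0$. We bound $|\zeta_r(\sigma_0+it_0)|$ by a local mean. The standing hypothesis that $\zeta_r'$ has polynomial growth in vertical strips of the region, say $\zeta_r'(\sigma+it)\ll t^{A}$, enters here. For $|t-t_0|\le\delta$ we write
\[
\zeta_r(\sigma_0+it_0)=\zeta_r(\sigma_0+it)-\int_{t_0}^{t} i\,\zeta_r'(\sigma_0+iu)\,du ,
\]
which gives $|\zeta_r(\sigma_0+it_0)|\le|\zeta_r(\sigma_0+it)|+O(\delta t_0^{A})$. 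Choose $\delta=t_0^{-A}$. Raising to the $2k$-th power and averaging over $|t-t_0|\le\delta$ yields
\[
|\zeta_r(\sigma_0+it_0)|^{2k}\ll \delta^{-1}\int_{t_0-\delta}^{t_0+\delta}|\zeta_r(\sigma_0+it)|^{2k}\,dt+1
\ll t_0^{A}\,(2t_0)^{1+\varepsilon}+1 ,
\]
where the mean value hypothesis is applied with $T=2t_0$. Taking $2k$-th roots gives
\[
\zeta_r(\sigma_0+it_0)\ll t_0^{(A+1+\varepsilon)/(2k)}.
\]
Because $k$ is arbitrary while $A$ is fixed by the growth hypothesis, letting $k\to\infty$ gives $O(t_0^{\varepsilon})$. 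The constant $A$ must be uniform on compact $\sigma$-ranges, and the statement's implied constants are allowed to depend on $\sigma$, so this suffices. Alternatively, one could use the standard Cauchy/subharmonicity argument: bound $|\zeta_r|^{2k}$ at a point by its average over a small disc, then integrate in $\sigma$. That route needs polynomial growth of $\zeta_r$ itself, which Proposition \ref{thm:Barnes_bounds} supplies only for $\sigma>r-1$.

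The main obstacle is uniformity and the role of $d$. The local-mean argument does not truly use $d$. The derivative hypothesis is what replaces the convexity-type polynomial bounds that Proposition \ref{thm:Barnes_bounds} provides only for $\sigma>r-1$; for $d\ge3$ the region $\sigma\ge r-d/2$ extends beyond $\sigma>r-1$. So I would check carefully that the hypothesis gives $\zeta_r'(\sigma+it)\ll t^{A}$ uniformly on the closed strip $r-d/2\le\sigma\le r$, away from the poles at $s=1,\dots,r$ (only small $t$ matters, since $t\ge2$). I would also verify that the bound holds for each fixed $\sigma_0$ separately, which is all the statement requires.
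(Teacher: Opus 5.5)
Your proposal is correct and follows essentially the same route as the paper. In both, the polynomial bound on $\zeta_r'$ controls $\zeta_r(\sigma+it)$ on a short interval around $t_0$, and the $2k$-th moment bound with $k$ large then forces $\zeta_r(\sigma+it_0)\ll t_0^{\varepsilon}$; you run this local-mean inequality directly, while the paper argues by contradiction, showing (as in Lemma \ref{lem:local_persistence}) that a large value $|\zeta_r(\sigma+it_n)|\ge t_n^{\eta}$ persists on an interval of length $c\,t_n^{\eta-B}$ and so makes the moment too large.
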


\medskip

\begin{remark}
The assumption on the polynomial growth condition in Theorem \ref{Lindelof_zeta_r_rank_d} is indispensable when the $\mathbb{Q}$-rank $d$ is greater than or equal to $2$. Without this assumption, the sufficiency part of the theorem 
may fail to hold. To see this breakdown, consider the rank two case ($r=2$) with $w_1=1$ and $w_2=\theta$, where $\theta$ is a Liouville number. By definition, there exist infinitely many positive integers $q$ such that the distance to the nearest integer, denoted by $\|q\theta\|$, becomes extremely small. 

In the functional equation in \cite{Miyagawa2026-0} for $\zeta_2(s, a, (1, \theta))$, we encounter the following series containing the small denominators:\begin{equation}\label{eq:small_denom_series}\sum_{n=1}^\infty \frac{e^{2\pi in(y_1+\theta y_2)}}{(e^{2\pi in\theta}-1)n^{1-s}}.\end{equation}Let $s = \sigma + it$ with $1 \leq \sigma \leq 2$. For every positive integer $N$, there exist infinitely many pairs of integers
$p$ and $q$ $(q\ge 2)$ such that $0<\left|\theta-p/q\right|<q^{-N}$.
Choose one such pair and denote it by $(p_k,q_k)$.
Then $\|q_k \theta\| = |q_k \theta - p_k| < q_k^{-(N-1)}$, and hence the target factor for $n=q_k$ satisfies
\[
\left| e^{2\pi i q_k \theta} - 1 \right|^{-1} \asymp \|q_k \theta\|^{-1} > q_k^{N-1}.
\]
Hence the coefficients occurring in
\eqref{eq:small_denom_series}
can become arbitrarily large along suitable subsequences.
This suggests that the functional equation by itself
does not automatically yield polynomial growth estimates
for $\zeta_r(s,a,\mathbf w)$.
Therefore, the polynomial growth assumption in
Theorem \ref{Lindelof_zeta_r_rank_d}
should not be regarded merely as a technical condition.
\end{remark}

\section{Auxiliary Lemmas and Proofs of the Main Theorems}
In this section, we collect several auxiliary lemmas that will be used in the proofs of the main theorems.

\medskip

\medskip

\begin{lemma}\label{lem:local_persistence}
Suppose that
$\zeta_r'(\sigma+it,a,\mathbf w)\ll t^B$
holds in a fixed vertical strip.
If
$|\zeta_r(\sigma+it_0,a,\mathbf w)|\geq t_0^\eta$
for some $\eta>0$, then
\[
|\zeta_r(\sigma+it,a,\mathbf w)|
\geq \frac12 t_0^\eta
\]
holds whenever
$|t-t_0|\leq c t_0^{\eta-B}$
with a sufficiently small constant $c>0$.
\end{lemma}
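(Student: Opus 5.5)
The plan is a direct mean value argument in the variable $t$ with $\sigma$ fixed. Write $f(t)=\zeta_r(\sigma+it,a,\mathbf w)$, and note that $f'(t)=i\,\zeta_r'(\sigma+it,a,\mathbf w)$, where $\zeta_r'$ is the derivative with respect to $s$. The hypothesis gives $|\zeta_r'(\sigma+iu,a,\mathbf w)|\le A u^B$ for all $u\ge 2$ in the strip, with some constant $A$. We take $t_0$ large enough that $t_0-ct_0^{\eta-B}\ge t_0/2$ and that this lies in the range $u\ge 2$. If $\eta-B\ge 1$ this requires $c<1/2$; if $\eta<B$ it is automatic for large $t_0$. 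Small $t_0$ can be absorbed into the choice of constants, or excluded by requiring $t_0$ sufficiently large, which is all that is needed later.

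The key step is the fundamental theorem of calculus along the vertical segment joining $\sigma+it_0$ to $\sigma+it$:
\[
|f(t)-f(t_0)|=\left|\int_{t_0}^{t}\zeta_r'(\sigma+iu,a,\mathbf w)\,du\right|\le |t-t_0|\max_{u\in[t_0,t]}A u^{B}.
\]
On the interval $|u-t_0|\le ct_0^{\eta-B}$ we want $u\asymp t_0$. This holds when $\eta\le B+1$, and in general once $c$ is small relative to the range of $\eta$ considered. Granting it, $u^B\le 2^{|B|}t_0^B$, so
\[
|f(t)-f(t_0)|\le A\,2^{|B|}\,c\,t_0^{\eta-B}\,t_0^{B}=A\,2^{|B|}c\,t_0^{\eta}.
\]

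We then choose $c\le (2^{|B|+1}A)^{-1}$. This makes $|f(t)-f(t_0)|\le \tfrac12 t_0^\eta$. The reverse triangle inequality then gives
\[
|f(t)|\ge |f(t_0)|-\tfrac12 t_0^\eta\ge \tfrac12 t_0^\eta,
\]
which is the claim.

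The only delicate point is keeping $u$ comparable to $t_0$ across the whole interval. This is the main obstacle, and it is why $c$ must be small and $t_0$ large. In the intended application $\eta$ is small and $B$ is a fixed positive growth exponent, so $\eta-B<1$. The interval length $ct_0^{\eta-B}$ is then $o(t_0)$ and the issue disappears. For full generality one may restrict to $0<\eta\le B+1$, since otherwise one can enlarge $B$, which only shrinks the interval.
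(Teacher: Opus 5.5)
Your proof is correct and follows the paper's argument: bound $|F(t)-F(t_0)|$ by $|t-t_0|$ times $\sup|\zeta_r'|\ll t_0^B$ on the segment, choose $c$ small so this is at most $\tfrac12 t_0^\eta$, and apply the reverse triangle inequality. Your care about keeping $u\asymp t_0$ fills in a point the paper passes over, but one aside needs correcting: enlarging $B$ proves the claim only on a shorter interval than the one stated, and the right way to dismiss $\eta>B+1$ is that integrating $\zeta_r'\ll t^B$ gives $\zeta_r(\sigma+it,a,\mathbf w)\ll t^{B+1}$ (for $B>0$), so $|\zeta_r(\sigma+it_0,a,\mathbf w)|\ge t_0^\eta$ with $\eta>B+1$ can only occur for bounded $t_0$.
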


\begin{proof}
Put
\[
F(t)=\zeta_r(\sigma+it,a,\mathbf w).
\]
Then
\[
F'(t)=i\zeta_r'(\sigma+it,a,\mathbf w),
\]
and hence, by the assumption,
\[
|F'(t)|\ll t^B
\]
in the strip under consideration. Therefore, for $t$ sufficiently close to
$t_0$, the mean value theorem gives
\[
|F(t)-F(t_0)|
\leq
\sup_{\xi\in[t,t_0]} |F'(\xi)|\, |t-t_0|
\ll
t_0^B |t-t_0|.
\]
If
\[
|t-t_0|\leq c t_0^{\eta-B}
\]
with $c>0$ sufficiently small, then
\[
|F(t)-F(t_0)|\leq \frac12 t_0^\eta.
\]
Thus
\[
|F(t)|
\geq
|F(t_0)|-|F(t)-F(t_0)|
\geq
t_0^\eta-\frac12 t_0^\eta
=
\frac12 t_0^\eta.
\]
This proves the lemma.
\end{proof}

\medskip

\begin{remark}
$\zeta_r(s,a,\mathbf{1})$ is referred to as the Hurwitz multiple zeta function, and it can be expressed as a linear combination of single Hurwitz zeta functions (see \cite{SrivastavaChoi2001} p. 86):  
\begin{align}  
  \zeta_r(s,a,\mathbf{1})=\sum_{j=0}^{r-1}p_{r,j}(a)\zeta_H(s-j, a). 
  \label{HMZ-HZ}
\end{align}
In this formula, $p_{r,j}(a)$ is given by  
\begin{align*}  
 p_{r,j}(a)=\frac{1}{(r-1)!}\sum_{l=j}^{r-1}(-1)^{r+1-j}\binom{l}{j}S(r,l+1) a^{l-j},  
\end{align*}  
where $S(r,l+1)$ denotes the Stirling number of the first kind.
\end{remark}

\medskip

\begin{proof}[Proof of Theorem \ref{Lindelof_d=1}]
Since $w_j=\lambda p_j$, we have
\[
\zeta_r(s,a,\mathbf{w})
=
\lambda^{-s}
\sum_{m_1,\ldots,m_r\geq 0}
\frac{1}{(a/\lambda+p_1m_1+\cdots+p_rm_r)^s}.
\]
Put $b=a/\lambda$. For each integer $n\geq 0$, let
\[
A(n)=\#\{(m_1,\ldots,m_r)\in \mathbb{Z}_{\geq 0}^r
\mid
p_1m_1+\cdots+p_rm_r=n\}.
\]
Then
\[
\zeta_r(s,a,\mathbf{w})
=
\lambda^{-s}\sum_{n=0}^{\infty}\frac{A(n)}{(n+b)^s}.
\]
It is well known that $A(n)$ is a quasi-polynomial of degree $r-1$.
Hence the above Dirichlet series can be expressed as a finite linear
combination of Hurwitz zeta-functions:
\[
\zeta_r(s,a,\mathbf{w})
=
\lambda^{-s}
\sum_{\nu=0}^{q-1}
\sum_{\ell=0}^{r-1}
c_{\nu,\ell}\,
\zeta_H(s-\ell,\beta_\nu),
\]
where $q$ may be taken to be a common multiple of $p_1,\ldots,p_r$,
the constants $c_{\nu,\ell}$ depend only on $p_1,\ldots,p_r$ and $a$,
and $\beta_\nu>0$.

The implication from the Lindel\"of-type bound to the moment estimates is
immediate. Indeed, replacing $\varepsilon$ by
$\varepsilon/(2k)$ in the pointwise estimate gives
\[
\left|\zeta_r(\sigma+it,a,\mathbf{w})\right|^{2k}
\ll t^\varepsilon,
\]
and therefore
\[
\int_2^T
\left|\zeta_r(\sigma+it,a,\mathbf{w})\right|^{2k}dt
\ll T^{1+\varepsilon}.
\]
Conversely, assume that the moment estimate holds for every
$k\in\mathbb N$ and every $\varepsilon>0$.
Using the above finite linear representation in terms of Hurwitz
zeta-functions, together with the standard polynomial growth estimates
for the Hurwitz zeta-function in vertical strips, we obtain a polynomial
growth estimate for
$
\zeta_r(s,a,\mathbf{w})
$
and also for its derivative with respect to $t$ in the strip
$r-1/2\leq \sigma\leq r$.
Then the standard local mean-value argument applies: if
$
\left|\zeta_r(\sigma+it_0,a,\mathbf{w})\right|
$
were larger than $t_0^\eta$ for some $\eta>0$, then the derivative bound
would imply that this large value persists on a short interval around
$t_0$. Integrating over this short interval and using the assumed
$2k$-th moment estimate gives a contradiction when $k$ is chosen
sufficiently large. Hence
\[
\zeta_r(\sigma+it,a,\mathbf{w})=O(t^\varepsilon)
\]
for every $\varepsilon>0$.
This proves the equivalence.
\end{proof}


	\medskip
	
	\begin{proof}[Proof of Theorem \ref{Lindelof_zeta_r_rank_d}]
Assume that
\[
\zeta_r(\sigma+it,a,\mathbf{w})
=
O\!\left(|t|^{\varepsilon/2k}\right)
\qquad
 \left( r-\frac{d}{2}\le \sigma\le r \right)
\]
for any $\varepsilon>0$ and $k\in\mathbb{N}$.  Then
\[
\int_1^T
|\zeta_r(\sigma+it,a,\mathbf{w})|^{2k}dt
\ll
\int_1^T t^\varepsilon dt
\ll
T^{1+\varepsilon}.
\]
Conversely, assume that the moment estimate holds for every
$k\in\mathbb N$ and every $\varepsilon>0$.
Suppose, to the contrary, that the Lindel\"of-type estimate fails.
Then there exist $\eta>0$ and a sequence $t_n\to\infty$ such that
\[
\left|\zeta_r(\sigma+it_n,a,\mathbf w)\right|
\geq t_n^\eta .
\]
By the assumed polynomial growth of $\zeta_r'(s,a,\mathbf w)$ in
vertical strips, there exists a constant $B>0$ such that
\[
\zeta_r'(\sigma+it,a,\mathbf w)\ll t^B
\]
in the strip under consideration. Hence, for
$
|t-t_n|\leq c\,t_n^{\eta-B}
$
with a sufficiently small constant $c>0$, the mean value theorem gives
\[
\left|\zeta_r(\sigma+it,a,\mathbf w)\right|
\geq \frac12 t_n^\eta .
\]
Therefore,
\[
\int_{t_n-c t_n^{\eta-B}}^{t_n+c t_n^{\eta-B}}
\left|\zeta_r(\sigma+it,a,\mathbf w)\right|^{2k}dt
\gg
t_n^{2k\eta+\eta-B}.
\]
Choosing $k$ sufficiently large, this contradicts the assumed bound
\[
\int_2^T
\left|\zeta_r(\sigma+it,a,\mathbf w)\right|^{2k}dt
\ll T^{1+\varepsilon}.
\]
Thus the Lindel\"of-type estimate follows.
\end{proof}



\medskip

\medskip

\begin{remark}\label{rem:possible_half_bound}
By Proposition \ref{finite_sum}, we have
\[
\zeta_r(s,a,\mathbf{w})
=
\sum_{0\le m_1,\dots,m_r\le x}
(a+\mathbf{m}\cdot\mathbf{w})^{-s}
+
O(x^{r-1-\sigma})
\]
for $x\asymp t$ and $r-1<\sigma<r$.
The trivial estimate for the finite sum yields
\[
\zeta_r(\sigma+it,a,\mathbf{w})
\ll
t^{r-\sigma}.
\]
However, the oscillatory factor
\[
(a+\mathbf{m}\cdot\mathbf{w})^{-it}
=
e^{-it\log(a+\mathbf{m}\cdot\mathbf{w})}
\]
suggests that additional cancellation may occur.
Indeed, after fixing
$m_2,\dots,m_r$,
the inner sum with respect to $m_1$ can be regarded as a
one-dimensional exponential sum of the form
\[
\sum_{0\le m_1\le x}
e^{-it\log(a+B+m_1w_1)}
(a+B+m_1w_1)^{-\sigma},
\]
where
\[
B=m_2w_2+\cdots+m_rw_r.
\]
This suggests the possible bound
\[
\zeta_r(\sigma+it,a,\mathbf{w})
\ll
t^{1/2}
\qquad
\left(r-1<\sigma\le r-\frac12\right),
\]
which would improve the trivial estimate
\[
\zeta_r(\sigma+it,a,\mathbf{w})
\ll
t^{r-\sigma}
\]
in this region.
A rigorous proof would require a more delicate analysis of the
associated exponential sums, such as dyadic decomposition and
van der Corput type estimates.
This pointwise estimate also gives the mean-square bound
\[
 \int_1^T |\zeta_r(\sigma+it,a,\mathbf w)|^2dt \ll T^2
 \qquad \left(r-1<\sigma<r-\frac{1}{2}\right),
\]
which improves the bound obtained
directly from $\zeta_r(\sigma+it,a,\mathbf w)\ll t^{r-\sigma}$.
These considerations suggest that stronger pointwise and mean-square
bounds may hold in the region
$r-1<\sigma\le r-1/2$.
However, a rigorous proof appears to require a substantially more
delicate analysis of the associated exponential sums, and we leave this
problem for future work.
\end{remark}


\medskip

\bibliographystyle{plain}
\bibliography{References} 

\end{document}